\theoremstyle{definition}
\def\fnum{equation}
\newtheorem{Thm}[\fnum]{Theorem}
\newtheorem{Cor}[\fnum]{Corollary}
\newtheorem{Lem}[\fnum]{Lemma}
\newtheorem{Pro}[\fnum]{Proposition}
\numberwithin{equation}{section}
\newcommand{\Vol}{{\text{Vol}}}
\newcommand{\LL}{{\mathcal{L}}}
\newcommand{\Ric}{{\text{Ric}}}
\newcommand{\Hess}{{\text {Hess}}}
\def\RR{{\bold R}}
\def\LL{{\bold L }}
\newcommand{\dv}{{\text {div}}}
\newcommand{\e}{{\text {e}}}
\newcommand{\cL}{{\mathcal{L}}}
\newcommand{\R}{{\text{R}}}
\newcommand{\cP}{{\mathcal{P}}}
\newcommand{\eqr}[1]{(\ref{#1})}
\title{Propagation of symmetries for Ricci shrinkers}
\author[]{Tobias Holck Colding}%
\address{MIT, Dept. of Math.\\
77 Massachusetts Avenue, Cambridge, MA 02139-4307.}
\author[]{William P. Minicozzi II}%
\thanks{The  authors
were partially supported by NSF  DMS Grants 2104349 and 2005345.}
\email{colding@math.mit.edu  and minicozz@math.mit.edu}
\begin{document}

\maketitle

{\centering\footnotesize Dedicated to our friend David Jerison.\par}

\begin{abstract}
We will show that if a gradient shrinking Ricci soliton has an
  approximate symmetry on one scale, this   symmetry  propagates   to larger scales.  This is an example of the shrinker principle  which roughly states that 
information radiates outwards for shrinking solitons.
\end{abstract}

 \section{Introduction}

 A time-varying metric $\bf{g}$ on a manifold $M$ is a Ricci flow if
 \begin{align}
 	\partial_t \, \bf{g} = - 2 \, \Ric_{\bf{g}} \, .
 \end{align}
 This is a nonlinear geometric evolution equation where singularities can form and understanding them is the key for understanding the flow.     
 For instance, one important ingredient in the rigidity of cylinders for the Ricci flow in \cite{CM4} was a new estimate called propagation of almost splitting.  This showed that
 if a gradient shrinking Ricci soliton almost splits off a line on one scale, then it also almost splits on a strictly larger set, though with a loss on the estimates.  
 
 A manifold splits off a line if it is the metric product of a Euclidean factor $\RR$ with a manifold of one dimension less. A splitting  gives a linear function whose gradient is
 a parallel vector field.
 Similarly, 
 an almost splitting gives an almost parallel vector field, which is a special type of almost Killing vector field.  Thus, the propagation of almost splitting is equivalent to showing that 
 a certain type of approximate symmetry extends to larger scales.  We will see here that this holds also for more general approximate symmetries.
 
 A triple $(M,g,f)$ of a manifold $M$, metric $g$ and function $f$  
is a gradient  Ricci soliton   if there is a constant $\kappa$ so that
\begin{align}	\label{e:kappahere}
\Ric + \Hess_f = \kappa \, g \, .
\end{align}
Up to diffeomorphisms, the Ricci flow of  a gradient Ricci soliton evolves by shrinking when $\kappa > 0$, is static (or steady) when $\kappa = 0$, and expanding
when $\kappa < 0$.  
Gradient shrinking Ricci solitons model finite time singularities of the flow and describe the asymptotic structure at minus infinity for ancient flows.

A Killing field $V$ is a 
  vector field   that generates an isometry, i.e.,     the Lie derivative  of the metric $g$ vanishes 
$$\LL_V g = 0 \, .$$
Equivalently, $V$ is Killing when its covariant derivative
 $\nabla V$ is skew-symmetric.   
  Many of the most important examples of solitons are highly symmetric; cf. \cite{BaK}, \cite{Br1}, \cite{Br2}, \cite{BrC}, 
 \cite{CIM}, \cite{CM4}, \cite{K}, \cite{LW}.

On Euclidean space, there are two types of Killing fields: the parallel vector fields that generate translations and the linearly growing rotation vector fields.
There are two very different types of symmetries on Ricci solitons, depending on whether or not the symmetry preserves the function $f$.   This dichotomy is already seen in the simplest examples of shrinkers: the Gaussian soliton $\left(\RR^n , \delta_{ij} ,  \frac{|x|^2}{4} \right)$ and cylinders.  Rotations around the origin on the Gaussian soliton  preserve both the metric and $f= \frac{|x|^2}{4}$, while the translations along the axis of a cylinder preserve the metric but not the level sets of $f$. 
 It is not hard to see that this is typical.   To make this precise, use the metric $g$ and function $f$, to define a weighted $L^2$ norm on functions, vector fields, and general tensors by 
\begin{align}
	\| \cdot \|_{L^2}^2 = \int_M |\cdot|^2 \, \e^{-f} \, .
\end{align}

\begin{Pro}	\label{t:RSK}
If $Y$ is an $L^2$ Killing field on a gradient shrinking Ricci soliton, then either $Y$ preserves $f$  or $M$ splits off a line.
\end{Pro}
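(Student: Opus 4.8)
The plan is to study the single scalar function $h = Y(f) = \langle Y,\nabla f\rangle$ and to show that $\nabla h$ is a parallel vector field on $M$; the two alternatives in the Proposition then simply record whether or not $\nabla h$ vanishes identically. To see that $\nabla h$ is parallel, note that, $Y$ being Killing, its flow $\phi_t$ acts by isometries, so $\phi_t^*\Ric = \Ric$; combined with the soliton equation \eqr{e:kappahere} this gives $\phi_t^*\Hess_f = \kappa\,\phi_t^* g - \phi_t^*\Ric = \kappa g - \Ric = \Hess_f$. Since $\phi_t$ is an isometry, $\phi_t^*\Hess_f = \Hess_{f\circ\phi_t}$; differentiating $\Hess_{f\circ\phi_t}=\Hess_f$ at $t=0$ and using that the Hessian is linear in the function yields $\Hess_{Y(f)} = 0$. (Equivalently, a Killing field is affine, so $\LL_Y$ commutes with $\nabla$ and $d$ and $\Hess_{Y(f)} = \LL_Y(\nabla\,df) = \LL_Y\Hess_f = 0$.) Thus $\nabla h$ is parallel, and in particular $|\nabla h|$ is constant on $M$.

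Suppose first that $\nabla h$ is not identically zero, so that $|\nabla h|$ is a positive constant. After rescaling $h$, we obtain a globally defined function on the complete manifold $M$ with $\Hess h \equiv 0$ and $|\nabla h|\equiv 1$. Its gradient flow is then complete and acts by isometries, the level sets of $h$ are totally geodesic and mutually isometric, and the map sending $(t,x)$ to the time-$t$ flow of $x\in h^{-1}(0)$ identifies $M$ isometrically with $\RR\times h^{-1}(0)$. Hence $M$ splits off a line, which is the second alternative.

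It remains to treat the case $\nabla h\equiv 0$. Then $h$ is a constant, since $M$ is connected, and we must show this constant is zero, that is, that $Y(f)\equiv 0$ and so $Y$ preserves $f$; this is where the $L^2$ hypothesis enters. Because $\nabla Y$ is skew-symmetric, $\dv Y = \Tr\nabla Y = 0$. Fixing $p\in M$ and cutoff functions $\varphi_R$ with $\varphi_R\equiv 1$ on $B_R(p)$, $\supp\varphi_R\subset B_{2R}(p)$, and $|\nabla\varphi_R|\le C/R$, integration by parts — using $\dv Y = 0$ and $\nabla\e^{-f} = -\e^{-f}\nabla f$ — gives
\[
  \int_M \varphi_R\,h\,\e^{-f} \;=\; \int_M \langle\nabla\varphi_R, Y\rangle\,\e^{-f}\,.
\]
By Cauchy--Schwarz the right-hand side is at most $\tfrac{C}{R}\,\|Y\|_{L^2}\big(\int_{B_{2R}\setminus B_R}\e^{-f}\big)^{1/2}$, which tends to $0$ as $R\to\infty$ because $\int_M\e^{-f}<\infty$ on a complete shrinker. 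Since $h\,\e^{-f}\in L^1(M)$, the left-hand side tends to $h\int_M\e^{-f}$, and $\int_M\e^{-f}>0$; hence $h=0$.

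The one step that is not essentially formal is this last integration by parts: the vanishing of the boundary term at infinity is precisely where one must play the $L^2$ bound on $Y$ against the Gaussian decay of $\e^{-f}$ and the polynomial volume growth of the shrinker. The only other external input is the standard fact that a complete Riemannian manifold carrying a globally defined function with parallel, nowhere-vanishing gradient splits isometrically off a line. With these in hand, the argument is just bookkeeping.
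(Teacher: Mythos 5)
Your proof is correct, but it takes a genuinely different route from the paper's. The paper works with $v=\dv_f Y$ (which, since $\dv Y=0$, equals $-\langle Y,\nabla f\rangle=-h$ in your notation): it first shows $v$ solves $\cL v=-\tfrac12 v$ via \eqr{e:killingdd}, uses Lemma \ref{l:interpcP} to get $v\in L^2$, and then integrates the drift Bochner formula, where the eigenvalue $\tfrac12$ exactly cancels the $\left(\tfrac12-\mu\right)|\nabla v|^2$ term and forces $\|\Hess_v\|_{L^2}=0$. You instead obtain the same key identity $\Hess_h\equiv 0$ \emph{pointwise}, with no integration and no $L^2$ hypothesis, from the naturality of $\Ric$ under the isometry flow of $Y$ together with \eqr{e:kappahere} (equivalently, $\LL_Y\Hess_f=\Hess_{Y(f)}$ for an affine field). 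What your approach buys: the Hessian identity holds for any Killing field on any gradient Ricci soliton (any sign of $\kappa$), and the $L^2$ assumption is only invoked at the very end, to rule out $h$ being a nonzero constant via the cutoff integration by parts against the finite weighted volume. What the paper's approach buys: it stays entirely inside the operator framework ($\cL$, $\cP$, $\dv_f$) that the rest of the paper is built on, which is why the eigenvalue equation \eqr{e:killingdd} is the natural vehicle there. Two small remarks: your final step could be made even softer by noting that on a complete shrinker $f$ is proper and bounded below by \eqr{e:ccz}, so $\nabla f$ vanishes at a minimum of $f$ and a constant $h=\langle Y,\nabla f\rangle$ is automatically zero there --- so the $L^2$ hypothesis is not actually needed in your argument; and if you use the flow $\phi_t$ rather than the purely infinitesimal identity, you should either invoke completeness of Killing fields on complete manifolds or note that the local flow suffices for differentiating at $t=0$.
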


The metric and weight induce  weighted divergence operators $\dv_f$ on vector fields and symmetric two-tensors and a drift Laplacian $\cL$ on general tensors.  
The adjoint $\dv_f^*$ of $\dv_f$ 
takes a vector field $V$ to the symmetric two-tensor 
\begin{align}
	\dv_f^* \, V = - \frac{1}{2} \, \LL_V \, g \, .
\end{align}
As in \cite{CM4}, define a self-adjoint operator $\cP$ on vector fields by
\begin{align}
	\cP \, Y = \dv_f \circ \dv_f^* \, Y \, .
\end{align}
  For a  Killing field $V$, $\dv_f^* \, V$ vanishes and, thus,
   $\cP \, V = 0$.

 For an  {\it{approximate}} Killing field,   the associated flow {\it almost} preserves the metric $g$.
As mentioned, approximate translations played an important role in \cite{CM4} to show   ``propagation of almost splitting''.  
Our interest here will be a corresponding propagation for more general symmetries.  

We will show that if a gradient shrinking Ricci soliton {{\it{(shrinker)}} has an
  approximate symmetry on one scale, then this approximate symmetry  propagates outwards to larger scales.  This is an example of the shrinker principle, \cite{CIM}, \cite{CM4}, which roughly states that 
information radiates outwards for these types of equations.

\begin{Thm}	\label{t:main}
Let $(M,g,f)$ be a non-compact shrinker and $C_1 $ a constant with $|\R| \leq C_1$.  There exist  $C_2 , R  $ so that if
$Y$ is a vector field on $\{ f < r^2/4 \}$ with $r \geq R$ and
\begin{enumerate}
\item $\int_{ f< r^2/4} |Y|^2 \, \e^{-f} = 1$ and $\int_{ f< r^2/4} |\dv_f^* Y|^2 \, \e^{-f} \leq \bar{\mu} \leq \frac{1}{4}$,
\item $  |Y| + |\nabla Y| \leq C_1 \, r$ on $\{ f < r^2/4  \}$,
\end{enumerate}
then there is a vector field $Z$ on $M$ with $  \|Z\|_{L^2} = 1$, $\cP \, Z = \mu \, Z$ and satisfying
\begin{enumerate}
\item[(Z1)]  $\| \dv_f^* Z \|_{L^2}^2 = \mu \leq C_2 \, \left( \bar{\mu} + r^{4+n} \, \e^{ - \frac{r^2}{4}} \right)$.
\item[(Z2)]  For $s \in (r^2/4 , r^2)$, we have $\int_{f=s} |\dv_f^* Z|^2 \leq C_2 \, r^{C_2} \, \mu$.
\end{enumerate}
\end{Thm}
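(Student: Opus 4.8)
The plan is to produce $Z$ as a suitable spectral projection of the given compactly-supported near-Killing field $Y$, where the spectrum is that of $\cP = \dv_f \circ \dv_f^*$ acting on $L^2$ vector fields on all of $M$. Since $\cP$ is self-adjoint with discrete spectrum in the weighted $L^2$ space (using $|\R| \leq C_1$ and standard properties of the drift Laplacian on shrinkers, as in \cite{CM4}), we may decompose any $L^2$ vector field into eigenfields. First I would extend $Y$ by a cutoff to a globally defined $L^2$ vector field $\tilde Y$ on $M$ — multiply $Y$ by a function of $f$ that is $1$ on $\{f < (r^2/4) - 1\}$ and $0$ near $\{f = r^2/4\}$; hypotheses (1) and (2) control the error introduced, with the Gaussian weight $\e^{-f} \sim \e^{-r^2/4}$ on the transition region contributing the $r^{4+n}\,\e^{-r^2/4}$ term. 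Then $\|\tilde Y\|_{L^2}^2$ is close to $1$ and $\|\dv_f^*\tilde Y\|_{L^2}^2 = \langle \cP \tilde Y, \tilde Y\rangle$ is at most $C_2(\bar\mu + r^{4+n}\,\e^{-r^2/4})$.

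Next I would invoke a spectral-gap / pigeonhole argument: since $\langle \cP \tilde Y,\tilde Y\rangle$ is small and $\|\tilde Y\|$ is close to $1$, most of the $L^2$ mass of $\tilde Y$ lies in eigenspaces with small eigenvalue, so there is an eigenvalue $\mu$ bounded by $C_2(\bar\mu + r^{4+n}\,\e^{-r^2/4})$ whose eigenspace carries a definite fraction of the mass; take $Z$ to be a unit-norm eigenfield with $\cP Z = \mu Z$, which gives (Z1) since $\|\dv_f^* Z\|_{L^2}^2 = \langle \cP Z, Z\rangle = \mu$. (One must check $\mu$ is not forced to be $0$ unless we are content with a Killing field, but a Killing $Z$ would, by Proposition \ref{t:RSK}, either preserve $f$ or force a splitting — in the latter case the theorem is vacuous/trivial, and in the former $\mu = 0$ trivially satisfies both conclusions.)

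The substantive step is (Z2): the pointwise-on-level-sets bound $\int_{f=s}|\dv_f^* Z|^2 \leq C_2 r^{C_2}\mu$ for $s \in (r^2/4, r^2)$. Here $Z$ is a genuine eigenfield on all of $M$, so $h := \dv_f^* Z$, a symmetric two-tensor, satisfies a drift-elliptic equation coupling it to $Z$ (differentiating $\cP Z = \mu Z$; this is exactly the Bochner-type system used for almost Killing fields in \cite{CM4}). The idea is to run a frequency / monotonicity argument for $\int_{f=s}|h|^2$ as a function of $s$: one shows the weighted integral $U(s) = \int_{f=s}|h|^2\,\e^{-f}$ or a related quantity satisfies a differential inequality — essentially a log-convexity or Almgren-frequency-type estimate with error terms controlled by $\mu$ — forcing growth at most polynomially in $s$ out to $s \sim r^2$, starting from the $L^2$ bound $\int_M |h|^2\,\e^{-f} = \mu$. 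This is the ``information radiates outwards'' mechanism: the $L^2$ smallness of $h$ near scale $r$ is upgraded to smallness on each level set between $r^2/4$ and $r^2$. I expect this monotonicity/iteration step — establishing the right differential inequality for $\int_{f=s}|h|^2$ and absorbing the coupling terms from the eigenfield equation and from the soliton identity \eqref{e:kappahere} — to be the main obstacle; the cutoff and spectral-projection steps are comparatively routine.
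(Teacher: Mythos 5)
Your architecture for (Z1) --- cut $Y$ off to a compactly supported field, use hypotheses (1) and (2) plus the volume bound \eqr{e:cczV} to control the error by $r^{4+n}\,\e^{-r^2/4}$, and then extract a low eigenfield of the self-adjoint operator $\cP$ by the variational characterization of eigenvalues --- is essentially the paper's (the cutoff step of the proof of Theorem \ref{t:main} together with Lemma \ref{l:exist}), and that part is sound. (Your worry about $\mu=0$ is a non-issue: if $Z$ is Killing, both conclusions hold trivially, and no case analysis via Proposition \ref{t:RSK} is needed.)

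The gap is in (Z2), which you correctly identify as the substantive step but leave unexecuted; the missing ingredient is structural, not merely technical. If you try to run a frequency/monotonicity argument directly on $h=\dv_f^*Z$, the equation you get from $\cP Z=\mu Z$ via \eqr{e:firstc} and \eqr{l:bochner} is
\begin{align}
L\,\dv_f^* Z \;=\; \dv_f^*\Bigl(\cL+\tfrac12\Bigr)Z \;=\; -2\mu\,\dv_f^*Z \;+\; \Hess_{\dv_f Z}\,,
\end{align}
and the coupling term $\Hess_{\dv_f Z}$ is not pointwise controlled by $|\dv_f^*Z|$, so it cannot simply be ``absorbed'' into a differential inequality for $\int_{f=s}|h|^2$ alone; the log-convexity you hope for does not close as stated. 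The paper's key device is the decomposition of Proposition \ref{p:4main0}: split the eigenfield into a divergence-free part $Z_0=Z+\frac{2}{1+2\mu}\nabla\,\dv_f Z$ and a gradient part, each satisfying a genuine eigenvalue equation for $\cL$; then \eqr{e:firstc} converts these into eigenvalue equations for $L$ on the two tensors $\dv_f^*Z_0$ and $\Hess_{\dv_f Z}$, the curvature bound $|\R|\le C_1$ gives $\langle \cL w, w\rangle\ge -C|w|^2$ for each, and the already-established polynomial growth estimate for the weighted spherical averages $I_w$ (Proposition \ref{p:pgCM4}, i.e.\ Theorem $3.4$ of \cite{CM4}) applies to each piece separately. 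One also needs the initial-scale input $I_w(r_1)\le C\mu$ for both pieces, which follows from $\|\dv_f^*Z\|_{L^2}^2=\mu$, the drift Bochner formula applied to $\dv_f Z$ (using (B)), and the coarea formula; recombining the two pieces gives (Z2). So the mechanism you name --- polynomial growth of level-set integrals propagating the $L^2$ smallness outward --- is the right one, but without the splitting into the two eigen-tensors the monotonicity step you flag as the ``main obstacle'' would indeed fail.
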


The theorem shows that the approximate symmetry extends to the larger set, though with a loss in the estimates.   This extension
 is   powerful in situations where the loss can be recovered using some additional structure particular to the situation, thus leading to a global symmetry.   
 In Theorem $0.2$ in  \cite{CM4}, the loss was recovered by solving a gauge problem and then using a 
 rigidity property of cylinders that showed up at the quadratic level.   This quadratic rigidity  relied on the structure of the compact factor of the cylinder 
 and not just on the existence of the approximate translation. 
 
 This kind of propagation of symmetry often plays an important role in understanding the structure of solutions, as well as the rate of convergence of a Ricci flow to a singularity.

\section{Weighted manifolds and Killing fields}

We will be most interested in gradient shrinking Ricci solitons, but many of the results hold more generally.  
Killing fields preserve the metric, so they also preserve volume and, thus, are always divergence-free.  However, they do not necessarily preserve the weighted volume,
 so the weighted divergence $\dv_f \, Y$ of a Killing field $Y$ need not vanish.  We will see, however, that it does satisfy an eigenvalue equation on any gradient Ricci soliton (see equation
 \eqr{e:killingdd}
 below).  
 
 To show this, we recall some general   formulas
from   Lemma $2.8$  in \cite{CM4}  
 on any gradient Ricci soliton 
 (i.e., $(M,g,f)$ satisfies \eqr{e:kappahere} for some constant $\kappa$)
 for any vector field $Y$:
\begin{align}
\left( \cL + \kappa \right) \, \dv_f \, Y &= - \dv_f \, (\cP\,Y)\, ,\label{e:claim1}\\
\cL\,\nabla\,\dv_f\,(Y)&=-\nabla\,\dv_f\,(\cP\,Y)\, . \label{e:claim2}
\end{align}
If $Y$ is a Killing field on a gradient Ricci soliton, then  \eqr{e:claim1} gives that
 \begin{align}	\label{e:killingdd}
 	(\cL + \kappa) \, \dv_f Y = 0 \, .
\end{align}
As a consequence of this,   $\dv_f Y$ is harmonic:

 \begin{Cor}	\label{c:killharmonic}
If $Y$ is a Killing field on a gradient Ricci soliton, then $\dv_f \, Y$ is harmonic.
\end{Cor}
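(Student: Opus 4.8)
\emph{Proof proposal.} The plan is to feed \eqref{e:killingdd} into the definition of the drift Laplacian. On functions $\cL = \Delta - \nabla_{\nabla f}$, so \eqref{e:killingdd} reads $\Delta (\dv_f Y) = \nabla_{\nabla f}(\dv_f Y) - \kappa\, \dv_f Y$, and therefore it suffices to prove the single identity
\[
\nabla_{\nabla f}\,(\dv_f Y) = \kappa \, \dv_f Y \, ;
\]
substituting this back then leaves $\Delta(\dv_f Y)=0$, i.e.\ $\dv_f Y$ is harmonic in the ordinary sense.

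To make the left side computable, first use that a Killing field is divergence free, since $\dv Y = \Tr \nabla Y = 0$ by skew-symmetry of $\nabla Y$; hence $\dv_f Y = \dv Y - \langle \nabla f, Y\rangle = -\langle \nabla f, Y\rangle$. Writing $u := \dv_f Y = -\langle \nabla f, Y\rangle$ and differentiating along $\nabla f$ gives $\nabla_{\nabla f} u = -\Hess_f(\nabla f, Y) - \langle \nabla_{\nabla f} Y, \nabla f\rangle$, and the last term drops because $\langle \nabla_X Y, X\rangle = 0$ for every $X$ by skew-symmetry of $\nabla Y$. So $\nabla_{\nabla f} u = -\Hess_f(\nabla f, Y)$. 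Now evaluate the soliton equation \eqref{e:kappahere} on the pair $(\nabla f, Y)$: $\Hess_f(\nabla f, Y) = \kappa\,\langle \nabla f, Y\rangle - \Ric(\nabla f, Y) = -\kappa\, u - \Ric(\nabla f, Y)$. Finally, the standard gradient Ricci soliton identity $2\,\Ric(\nabla f, \cdot) = \nabla \R$ gives $\Ric(\nabla f, Y) = \tfrac12\, Y(\R) = \tfrac12\, \LL_Y \R = 0$, since the flow of a Killing field is by isometries and hence preserves the scalar curvature. Combining, $\nabla_{\nabla f} u = \kappa\, u$, which is exactly the identity we wanted.

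I do not expect a real obstacle here: the argument is a short pointwise computation requiring no decay or compactness. The only things to be careful about are the two cancellations — that $\langle \nabla_{\nabla f} Y, \nabla f\rangle = 0$ from skew-symmetry of $\nabla Y$, and that the Ricci term $\Ric(\nabla f, Y)$ vanishes because a Killing field, even when it fails to preserve $f$, still preserves all curvature quantities — together with keeping track of the drift term in $\cL$. (If one wishes to avoid quoting $2\,\Ric(\nabla f,\cdot)=\nabla\R$, one can instead note that $\LL_Y g = 0$ forces $\LL_Y \Ric = 0$, hence $\LL_Y \Hess_f = \LL_Y(\kappa g - \Ric) = 0$, and organize the computation around this invariance.)
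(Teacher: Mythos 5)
Your proof is correct and is essentially the paper's own argument: both start from the eigenvalue equation \eqref{e:killingdd}, use $\dv Y=0$ to write $\dv_f Y=-\langle\nabla f,Y\rangle$, kill $\langle\nabla_{\nabla f}Y,\nabla f\rangle$ by skew-symmetry, trade $\Hess_f$ for $\Ric$ via the soliton equation, and finish with $2\,\Ric(\nabla f,\cdot)=\nabla S$ together with the invariance of the scalar curvature under the Killing flow. The only difference is organizational (you isolate the identity $\nabla_{\nabla f}(\dv_f Y)=\kappa\,\dv_f Y$ rather than computing $\Delta\,\dv_f Y$ directly), plus a notational slip: the paper reserves $\R$ for the Riemann tensor and writes $S$ for the scalar curvature.
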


\begin{proof}
By \eqr{e:killingdd}, $(\cL + \kappa) \, \dv_f \, Y = 0$.  Since $\dv \,Y = 0$ for any Killing field, it follows that
\begin{align}
	\Delta \, \dv_f \, Y &= \cL \, \dv_f \, Y + \langle \nabla f , \nabla \dv_f \, Y \rangle \notag \\
	&= - \kappa \, \dv_f \, Y - \langle \nabla f , \nabla \langle   Y, \nabla f \rangle  \rangle  \\
	&= \kappa \, \langle \nabla f , Y \rangle - \langle \nabla_{\nabla f} Y , \nabla f \rangle - \Hess_f (Y , \nabla f) \, . \notag
\end{align}
The second to last term vanishes because of the Killing equation.  For the last term, we bring in the soliton equation $\Ric + \Hess_f = \kappa \, g$ to get
\begin{align}
	\Delta \, \dv_f \, Y = \Ric (\nabla f , Y ) \, .
\end{align}
Finally, this last term vanishes since $\Ric (\nabla f , Y) = \frac{1}{2} \, \langle \nabla S , Y \rangle$ (see, e.g., ($1.14$) in \cite{CM4}) and the scalar curvature must be constant along a Killing field.
\end{proof}

Later, will need  a second order self-adjoint operator $L$ defined on symmetric two-tensors by
\begin{align}
	L \, h = \cL \, h + 2 \, \R (h) \, ,
\end{align}
where $\R$ is the Riemann curvature acting on $h$ in an orthonormal frame by 
\begin{align}
	[\R (h)]_{ij} = \sum_{m,n} \R_{imjn} \, h_{mn} \, .
\end{align}
  Theorem $1.32$ in \cite{CM4} gives the following relation between $L$ and $\dv_f^*$
\begin{align}
	L\, \dv_f^*\,(Y)&= \dv_f^* \, (\cL + \kappa) \, Y \, .
	\label{e:firstc}
\end{align}
It will also be useful that the operator $\cP$ is related to $\cL$ by
\begin{align}
	-2\, \cP &= \nabla \, \dv_f + \cL + \kappa  \, .  \label{l:bochner} 
\end{align}
Finally, an easy integration by parts argument shows that
 if $Y$ and $\cP \, Y$ are in $L^2$, then $Y \in W^{1,2}$ and $\dv_f Y \in L^2$; this is given by the next lemma:

 \begin{Lem}	\label{l:interpcP}
 (Lemma $2.15$, \cite{CM4}) 
For any gradient Ricci soliton if $Y$, $\cP\,Y\in L^2$, then $\dv_f\,(Y)$, $\nabla\, Y\in L^2$ and
\begin{align}
	\| \nabla Y \|_{L^2}^2 + \| \dv_f \, Y \|_{L^2}^2 \leq 2 \, \| Y \|_{L^2} \, \| (2\, \cP + \kappa) \, Y \|_{L^2} \, .
\end{align}
\end{Lem}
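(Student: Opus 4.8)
The plan is to read off the inequality from the Bochner-type identity \eqr{l:bochner}, which I rewrite as $(2\,\cP+\kappa)\,Y = -\nabla\,\dv_f\,Y - \cL\,Y$, by pairing it with $Y$ in the weighted inner product. Formally the computation is immediate: pairing $-\nabla\,\dv_f\,Y$ against $Y$ and integrating by parts (using that $\dv_f$ is the weighted adjoint of $-\nabla$, so $\int_M\langle\nabla\phi,V\rangle\,\e^{-f}=-\int_M\phi\,\dv_f\,V\,\e^{-f}$) yields $\|\dv_f\,Y\|_{L^2}^2$, while pairing $-\cL\,Y$ against $Y$ and integrating by parts (using that $-\cL$ is the drift connection Laplacian, so $\int_M\langle\cL\,Y,Y\rangle\,\e^{-f}=-\|\nabla Y\|_{L^2}^2$) yields $\|\nabla Y\|_{L^2}^2$. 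Hence formally $\langle(2\,\cP+\kappa)\,Y,Y\rangle_{L^2}=\|\nabla Y\|_{L^2}^2+\|\dv_f\,Y\|_{L^2}^2$, and Cauchy--Schwarz closes it. The entire difficulty, and the reason the statement is a genuine lemma, is that $Y$ is assumed only in $L^2$ with $\cP\,Y\in L^2$: neither $\nabla Y\in L^2$ nor the vanishing of boundary terms is available a priori, and indeed $\nabla Y,\dv_f\,Y\in L^2$ is part of the conclusion. So the integration by parts must be carried out with cutoffs and a limiting argument.

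Concretely, I would fix Lipschitz cutoffs $\phi=\phi_j$ with $0\le\phi\le1$, equal to $1$ on the metric ball $B_j$, supported in $B_{2j}$, and with $|\nabla\phi_j|\le C/j$ (linear interpolation in the distance across the annulus). Since $\cP$ is a second-order elliptic operator on vector fields --- from \eqr{l:bochner}, $2\,\cP=-\cL-\nabla\,\dv_f-\kappa$, where $-\cL$ is the elliptic drift Laplacian and $-\nabla\,\dv_f$ adds a non-negative principal part --- the hypothesis $\cP\,Y\in L^2$ upgrades to local smoothness of $Y$, so each integral over the compact support of $\phi_j$ is finite. I then pair $(2\,\cP+\kappa)\,Y$ against $\phi_j^2\,Y$ and integrate by parts as in the formal step, now retaining the terms where the derivative lands on $\phi_j^2$. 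Using the product rules $\dv_f(\phi_j^2\,Y)=\phi_j^2\,\dv_f\,Y+2\phi_j\langle\nabla\phi_j,Y\rangle$ and $\nabla(\phi_j^2\,Y)=\phi_j^2\,\nabla Y+2\phi_j\,\nabla\phi_j\otimes Y$, this gives
\[
\int_M \phi_j^2\left(|\nabla Y|^2 + (\dv_f\,Y)^2\right)\e^{-f} = \int_M \langle (2\,\cP+\kappa)\,Y,\, \phi_j^2\, Y\rangle\,\e^{-f} - \mathrm{Err}_j ,
\]
where $\mathrm{Err}_j$ collects the two cross terms and obeys $|\mathrm{Err}_j|\le 2\int_M \phi_j\,|\nabla\phi_j|\,|Y|\,(|\nabla Y|+|\dv_f\,Y|)\,\e^{-f}$.

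The handling of $\mathrm{Err}_j$ is where the constant $2$ in the statement is born. I apply Young's inequality to each cross term in the form $2\phi_j|\nabla\phi_j||Y|\,|\nabla Y|\le\tfrac12\phi_j^2|\nabla Y|^2+2|\nabla\phi_j|^2|Y|^2$ (and identically for the $\dv_f\,Y$ term), which lets me absorb $\tfrac12\int_M\phi_j^2\,(|\nabla Y|^2+(\dv_f\,Y)^2)\,\e^{-f}$ into the left-hand side. What remains is
\[
\tfrac12\int_M\phi_j^2\left(|\nabla Y|^2+(\dv_f\,Y)^2\right)\e^{-f}\le \langle(2\,\cP+\kappa)\,Y,\,\phi_j^2\,Y\rangle_{L^2}+C\int_M|\nabla\phi_j|^2\,|Y|^2\,\e^{-f} .
\]
Since $0\le\phi_j\le1$, the pairing on the right is at most $\|(2\,\cP+\kappa)\,Y\|_{L^2}\,\|Y\|_{L^2}$, and the remaining error tends to $0$ because $|\nabla\phi_j|^2\le C/j^2$ and $Y\in L^2$.

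Finally I let $j\to\infty$. Fatou's lemma on the left-hand side simultaneously shows $\nabla Y,\dv_f\,Y\in L^2$ and gives $\tfrac12\left(\|\nabla Y\|_{L^2}^2+\|\dv_f\,Y\|_{L^2}^2\right)\le\|(2\,\cP+\kappa)\,Y\|_{L^2}\,\|Y\|_{L^2}$, which is exactly the asserted inequality after clearing the factor $\tfrac12$. The main obstacle throughout is purely that one may not integrate by parts freely for an $L^2$ field of unknown regularity; the cutoff is designed so that the only place the not-yet-known-finite global quantity $\|\nabla Y\|_{L^2}^2+\|\dv_f\,Y\|_{L^2}^2$ enters is through localized integrals that Fatou controls, and the factor $2$ is precisely the cost of the Young-type absorption that lets the estimate close without assuming $\nabla Y\in L^2$ in advance.
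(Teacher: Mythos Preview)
The paper does not supply its own proof of this lemma; it is quoted from \cite{CM4} with only the remark that it follows from ``an easy integration by parts argument.'' Your proposal carries out precisely that argument --- pair the identity $(2\,\cP+\kappa)\,Y=-\nabla\,\dv_f\,Y-\cL\,Y$ from \eqr{l:bochner} against $\phi_j^2\,Y$, absorb the cutoff cross terms via Young, and pass to the limit with Fatou --- and it is correct in all details, including the appeal to ellipticity of $\cP$ for local regularity and the observation that the factor $2$ arises from the absorption step.
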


 \subsection{Shrinkers}
 
 We  now specialize to shrinkers where   $\kappa = \frac{1}{2}$.

\begin{proof}[Proof of Proposition \ref{t:RSK}]
We consider two cases.  Suppose first that $Y$ preserves $f$, so that $\langle \nabla f , Y \rangle = 0$.
Since $Y$ is Killing, $\dv \, Y = 0$ and, thus,  $\dv_f \, Y$ vanishes.

Suppose now that $Y$ does not preserve  $f$ and, thus,  $\dv_f \, Y$ does not vanish identically.  In this case,   \eqr{e:killingdd} gives that
$\dv_f \, Y$ is a non-trivial   solution to
\begin{align}
	\cL \, \dv_f Y = - \frac{1}{2} \, \dv_f Y \, .
\end{align}
  Moreover, Lemma \ref{l:interpcP} implies that $\dv_f \, Y \in L^2$.   If $\cL v = -\mu \, v$, then
 the drift Bochner formula gives that 
 \begin{align}
 	\frac{1}{2} \, \cL \, |\nabla v|^2 = 
 |\Hess_{v}|^2 + \left( \frac{1}{2} - \mu \right) \, |\nabla v|^2 \, .
 \end{align}
   Applying this with $v= \dv_f \, Y$ and $\mu = \frac{1}{2}$ and integrating over $M$, we conclude that
 \begin{align}
 	\| \Hess_{  \dv_f \, Y } \|_{L^2} = 0 \, .
\end{align}
 It follows that $\nabla \dv_f \, Y$ is a non-trivial parallel vector field, giving  the desired splitting.
\end{proof}

There is a second   distinction between these two types of Killing fields.  The translations are generated by gradient vector fields coming from least eigenfunctions on the shrinking soliton.  On the other hand, the Killing fields that preserve $f$ turn out to be orthogonal to all gradient vector fields.  Both types of vector fields   satisfy  eigenvalue equations for the drift Laplacian $\cL$, but at different eigenvalues.

\subsection{Geometric estimates for shrinkers}
We will next recall several useful formulas for shrinkers and some geometric estimates.  First, 
taking the trace of the shrinker equation gives that
 \begin{align}
\Delta\,f+S&=\frac{n}{2}  \, , \label{ee:aronson1} 
\end{align}
where $S$ is the scalar curvature.
On a complete shrinker, it is well-known (\cite{H}, cf. \cite{ChLN}) that $f$ can be normalized so that
 \begin{align}
|\nabla f|^2+S&=f\, . \label{ee:aronson2}
\end{align}
Since  $S\geq 0$ by \cite{Cn} (see \cite{ChLY} for an improved bound on non-compact shrinkers), the function $b = 2\, \sqrt{f}$ is nonnegative and satisfies 
  \begin{align}	\label{e:gradb1}
  	|\nabla b| \leq 1 \, .
\end{align}
  
We will also need some geometric estimates by Cao-Zhou for shrinkers.  First,  Theorem $1.1$ in \cite{CaZd} gives $c_1, c_2$, depending only on   $B_1(x_0) \subset M$ so that 
 \begin{align}	\label{e:ccz}
 	\frac{1}{4} \left( r(x) - c_1 \right)^2 &\leq f (x) \leq  	\frac{1}{4} \left( r(x) + c_2 \right)^2 \, ,
 \end{align}
 where $r(x)$ is the distance to a fixed point $x_0$ (the constants can be made universal if $x_0$ is chosen at the minimum of $f$).    Second, Theorem $1.2$ in \cite{CaZd} gives that shrinkers have at most Euclidean volume growth: There exists 
 $c_3$ so that
 \begin{align}	\label{e:cczV}
 	\Vol (B_r (x_0)) \leq c_3 \, r^n \, .
\end{align}

 \section{Small eigenvalues and almost Killing fields}
 
 Throughout this section, $(M,g,f)$ is a complete non-compact gradient shrinking Ricci soliton (i.e., $\kappa = \frac{1}{2}$).  
 
 \vskip2mm
 The operator $\cP$ was constructed to vanish on Killing fields.
 The next lemma uses the variational characterizations of eigenvalues and the exponential decay of the weight to find low eigenvalues for $\cP$ when there is an approximate symmetry on a large ball.
 
 \begin{Lem}	\label{l:exist}
 Suppose that there is a (non-trivial) compactly supported vector field $V$  with 
 \begin{align}	\label{e:projn}
 	 \| \dv_f^* V  \|^2  \leq \bar{\mu}   \|V \|_{L^2}^2 \, ,
 \end{align}
 where $\bar{\mu} < 1$.
 Then there is a $W^{1,2}$ vector field $Z$ with $\| Z \|_{L^2} = 1$,  so that
 \begin{enumerate}
 \item[(A)] $\cP \, Z = \mu \, Z$, with $\mu \in [0,\bar{\mu}]$, and $\| \dv_f^* Z \|_{L^2}^2 = \mu$.
 \item[(B)]  $\dv_f \, Z \in W^{1,2}$ satisfies $\cL \, \dv_f  Z = -\left( \frac{1}{2} + \mu \right) \, \dv_f Z$ and $\| \dv_f \, Z \|_{L^2}^2 \leq 4\, \mu + 1$.
 \end{enumerate}
 \end{Lem}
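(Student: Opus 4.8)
The plan is to obtain $Z$ as a minimizer of the weighted Rayleigh quotient for the self-adjoint operator $\cP$. Set
\[
\mu \ =\ \inf\Bigl\{\,\|\dv_f^*\,Y\|_{L^2}^2 \ :\ Y\ \text{a compactly supported vector field with}\ \|Y\|_{L^2}=1\,\Bigr\}\, .
\]
Since $\langle\cP\,Y,Y\rangle_{L^2}=\|\dv_f^*\,Y\|_{L^2}^2\ge 0$ we have $\mu\ge 0$, and \eqr{e:projn} applied to $V$ shows $\mu\le\bar{\mu}<1$. Note that $\dv_f^*\,Y=-\tfrac12\,\LL_Y\,g$ is minus the symmetrization of $\nabla Y$, so $|\dv_f^*\,Y|\le|\nabla Y|$; hence the quotient is continuous on $W^{1,2}$ and the infimum is unchanged over $W^{1,2}$. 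Once a minimizer $Z$ with $\|Z\|_{L^2}=1$ is produced, its Euler--Lagrange equation is $\cP\,Z=\mu\,Z$ weakly, hence classically since $\cP$ is a second order elliptic operator, and then $\|\dv_f^*\,Z\|_{L^2}^2=\langle\cP\,Z,Z\rangle_{L^2}=\mu\in[0,\bar{\mu}]$, which is (A).

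The one non-formal point is \emph{compactness}: the minimizing sequence must not leak to spatial infinity, and this is where the shrinker hypothesis is used. Pairing \eqr{l:bochner} with $Y$ and integrating by parts (using that $-\dv_f$ is the weighted adjoint of $\nabla$ and that $-\cL$ has weighted Dirichlet form $\|\nabla\cdot\|_{L^2}^2$) gives
\[
\|\dv_f^*\,Y\|_{L^2}^2 \ =\ \tfrac12\,\|\nabla Y\|_{L^2}^2+\tfrac12\,\|\dv_f\,Y\|_{L^2}^2-\tfrac14\,\|Y\|_{L^2}^2\, ,
\]
so a minimizing sequence $Y_k$ with $\|Y_k\|_{L^2}=1$ is bounded in $W^{1,2}$ with $\dv_f\,Y_k$ bounded in $L^2$. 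Next, the ground state transform $u=\e^{f/2}v$ conjugates $-\cL$ on functions to the Schr\"odinger operator $-\Delta+W$ with $W=\tfrac14|\nabla f|^2-\tfrac12\Delta f=\tfrac14 f+\tfrac14 S-\tfrac{n}{4}$ by \eqr{ee:aronson1}--\eqr{ee:aronson2}; since $S\ge 0$ and $f(x)\to\infty$ by \eqr{e:ccz}, we obtain a Poincar\'e inequality at infinity: there is $\lambda(R)\to\infty$ with $\int|\nabla u|^2\,\e^{-f}\ge\lambda(R)\int u^2\,\e^{-f}$ for $u$ supported off $B_R$, and via Kato's inequality $|\nabla|Y||\le|\nabla Y|$ the same holds for vector fields. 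Multiplying $Y_k$ by a cutoff that vanishes on $B_R$ and is $1$ off $B_{2R}$ then bounds the $L^2$-mass of $Y_k$ on $M\setminus B_{2R}$ by $C/\lambda(R)$ uniformly in $k$; together with the Rellich theorem on compact sets, a subsequence of $Y_k$ converges strongly in $L^2(M,\e^{-f})$ to some $Z$, necessarily with $\|Z\|_{L^2}=1$, and weakly in $W^{1,2}$. Weak lower semicontinuity of $\|\dv_f^*\cdot\|_{L^2}$ gives $\|\dv_f^*\,Z\|_{L^2}^2\le\mu$, hence $=\mu$, so $Z$ is a minimizer. (The same Poincar\'e inequality shows $\cP$ has no essential spectrum below $1$, an equivalent route to $\mu$ being an eigenvalue.)

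For (B): since $\cP\,Z=\mu\,Z\in L^2$, Lemma~\ref{l:interpcP} gives $Z\in W^{1,2}$, $\dv_f\,Z\in L^2$, and, using $(2\cP+\kappa)Z=(2\mu+\tfrac12)Z$ and $\|Z\|_{L^2}=1$,
\[
\|\dv_f\,Z\|_{L^2}^2\ \le\ \|\nabla Z\|_{L^2}^2+\|\dv_f\,Z\|_{L^2}^2\ \le\ 2\,\|Z\|_{L^2}\,\|(2\cP+\kappa)Z\|_{L^2}\ =\ 4\mu+1\, .
\]
Applying \eqr{e:claim1} to $Y=Z$ with $\cP\,Z=\mu\,Z$ gives $(\cL+\kappa)\,\dv_f\,Z=-\dv_f(\cP\,Z)=-\mu\,\dv_f\,Z$, that is, $\cL\,\dv_f\,Z=-(\tfrac12+\mu)\,\dv_f\,Z$. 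Finally $\dv_f\,Z\in W^{1,2}$: testing this eigenvalue equation against $(\dv_f\,Z)\,\phi^2\,\e^{-f}$ with cutoffs $\phi\equiv 1$ on $B_R$, $|\nabla\phi|\le 1$, and letting $R\to\infty$ (valid since $M$ is complete and $\dv_f\,Z\in L^2$) yields $\|\nabla\dv_f\,Z\|_{L^2}^2\le(1+2\mu)\,\|\dv_f\,Z\|_{L^2}^2<\infty$. This proves (B).

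The main obstacle is the compactness step --- excluding loss of $L^2$-mass at infinity for the minimizing sequence; once that is in hand, everything else is integration by parts together with \eqr{l:bochner}, \eqr{e:claim1}, and Lemma~\ref{l:interpcP}. It is precisely there that the shrinker assumptions enter, through the quadratic growth of $f$ (Cao--Zhou, \eqr{e:ccz}) and $S\ge 0$.
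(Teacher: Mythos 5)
Your proof is correct, but it takes a genuinely different route from the paper. The paper disposes of the existence question in one line by citing Lemma $4.20$ of \cite{CM4}, which already provides a complete $L^2$-orthonormal basis of smooth $W^{1,2}$ eigen-vector fields for $\cP$ with $\mu_i \to \infty$; the test field $V$ is then expanded in this basis and \eqr{e:projn} forces $\mu_1 \leq \bar{\mu}$, so $Z = Y_1$. What you do instead is re-prove the relevant special case of that black box from scratch: you run the direct method on the Rayleigh quotient for $\cP$, and you supply the crucial compactness (no leakage of $L^2$-mass to infinity) by hand, via the identity $\|\dv_f^* Y\|^2 = \tfrac12\|\nabla Y\|^2 + \tfrac12\|\dv_f Y\|^2 - \tfrac14\|Y\|^2$ from \eqr{l:bochner}, the ground-state transform showing the effective potential $\tfrac14 f + \tfrac14 S - \tfrac{n}{4}$ is confining (using \eqr{ee:aronson1}, \eqr{ee:aronson2}, $S \geq 0$ and \eqr{e:ccz}), and Kato's inequality to pass from functions to vector fields. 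This is essentially the standard discreteness-of-spectrum argument for drift operators with quadratically growing weight (cf.\ \cite{CxZh}), so your write-up is self-contained where the paper's is not, at the cost of length and of reproducing material the authors deliberately outsourced. The treatment of (A) once a minimizer exists, and all of (B) — the bound $4\mu+1$ via Lemma \ref{l:interpcP} with $(2\cP+\kappa)Z = (2\mu+\tfrac12)Z$, the eigenvalue equation via \eqr{e:claim1}, and $\dv_f Z \in W^{1,2}$ (you use a Caccioppoli cutoff where the paper invokes Proposition \ref{p:4main0}) — match the paper. Two cosmetic points: your parenthetical claim that the Poincar\'e inequality at infinity rules out essential spectrum ``below $1$'' understates what your own estimate gives (the bottom of the essential spectrum of $\cP$ is $+\infty$), and the constant in your final Caccioppoli estimate is not quite $(1+2\mu)$ once the cutoff error is tracked, but only finiteness is needed there.
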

 
 \begin{proof}
 The operator $\cP$ is self-adjoint by construction.
 Lemma $4.20$ in \cite{CM4} gives that $\cP$ has a complete basis of smooth $W^{1,2}$ eigen-vector fields $Y_i$ with eigenvalues 
 \begin{align}
 	\mu_i \to \infty \, , 
\end{align}
and with  $\| Y_i \|_{L^2} = 1$.  Moreover, Lemma $4.20$ in \cite{CM4} also gives that
 \begin{align}
 	  \int | \dv_f^* \, Y_i|^2 \, \e^{-f} = \int \langle Y_i , \cP \, Y_i \rangle \, \e^{-f} = \mu_i \, .
\end{align}
Expanding $V$ by projecting onto the $Y_i$'s, we write $V$ as
\begin{align}
	V = \sum_i a_i \, Y_i \, , 
\end{align}
where each $a_i \in \RR$ and 
\begin{align}	\label{e:fourier}
	\sum_i  a_i^2 = \| V \|_{L^2}^2  \, .
\end{align}
 Since the $Y_i$'s are $L^2$-orthonormal, \eqr{e:projn} gives that
\begin{align}
	\bar{\mu} \, \| V \|_{L^2}^2 &\geq \| \dv_f^* V \|_{L^2} =  \int \langle V , \cP \, V \rangle \, \e^{-f} = \sum_i a_i^2 \, \mu_i \notag \\
	&\geq \mu_1 \, \sum_i a_i^2 \, .
\end{align}
Comparing this with  \eqr{e:fourier}, we see that $\mu_1$ (the smallest $\mu_i$) is at most $\bar{\mu}$.  Set $Z= Y_1$.

Since $Z , \cP \, Z \in L^2$, Lemma \ref{l:interpcP} and Proposition \ref{p:4main0} give  that
\begin{align}
	\dv_f \, Z \in W^{1,2} {\text{ and }} \nabla Z \in L^2 \, .
\end{align}
  The $L^2$ bound on $\dv_f Z$ in (B) follows from Lemma \ref{l:interpcP}.
Finally, \eqr{e:claim1} gives that $\dv_f Z$ satisfies the eigenfunction equation
$\cL \, \dv_f Z = -\left( \frac{1}{2} + \mu \right) \, \dv_f \, Z$.
 \end{proof}

  The weighted $L^2$ bound on $\dv_f^*Z$ forces it to be small in the region where $f$ is small, but it says almost nothing where $f$ is large and the weight $\e^{-f}$ is very small.  To get better bounds when $f$ is large, we instead rely on polynomial growth estimates developed in \cite{CM4}.  To explain this, 
   given a tensor $w$ define the ``weighted spherical average'' $I_w(r)$ by
 \begin{align}
 	I_w (r) = r^{1-n} \, \int_{b=r} |w|^2 \, |\nabla b| \, .
 \end{align}
 A priori, this is well-defined at regular values of $b$, but  Lemma $3.27$ in \cite{CM4} shows that $I_w(r)$ can be extended to all $r$, this extension
  is differentiable almost everywhere, and is absolutely continuous as a function of $r$; cf. \cite{B}, \cite{CM1}, \cite{CM3}.  
  
  We have the following polynomial growth bounds:
  
  \begin{Pro}	\label{p:pgCM4}
 \cite{CM4}.  Given $\bar{\lambda}$, there exists $r_0$  so that if $w$ is an $L^2$ tensor with 
 \begin{align}	\label{e:LBL}
 	\langle \cL w , w \rangle \geq - \bar{\lambda} \, |w|^2 \, , 
\end{align}
then for any $r_2 > r_1 \geq r_0$ we have that
\begin{align}
	I_w(r_2) \leq  2\, \left( \frac{r_2}{r_1} \right)^{5\, \bar{\lambda}} \, I_w (r_1) \, .
\end{align}
\end{Pro}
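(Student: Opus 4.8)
The plan is to prove this by an Almgren-type frequency monotonicity, run with respect to the function $b = 2\sqrt{f}$, which plays the role that the Euclidean radius plays for the Ornstein--Uhlenbeck operator, and with the drift Laplacian $\cL$ in place of the Euclidean Laplacian. The two geometric inputs that make it work are $|\nabla b| \le 1$ from \eqref{e:gradb1} and the identity $b\,\cL\,b = n - |\nabla b|^2 - \tfrac{b^2}{2}$, obtained by combining \eqref{ee:aronson1}--\eqref{ee:aronson2}; the term $-\tfrac{b^2}{2}$ here is exactly what turns the generic exponential growth of $\cL$-(sub)solutions into polynomial growth, and the defect $1 - |\nabla b|^2 = S/f$ is small once $b$ is large, since $0 \le S \le f$. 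The hypothesis \eqref{e:LBL} enters only through $u := |w|^2 \ge 0$: the product rule for $\cL$ gives $\tfrac12\,\cL\,u = |\nabla w|^2 + \langle \cL\,w , w\rangle$, so \eqref{e:LBL} yields both the subsolution inequality $(\cL + 2\bar\lambda)\,u \ge 0$ and the identity $|\nabla w|^2 = \tfrac12\,\cL\,u - \langle \cL\,w , w\rangle$ (together with Kato's inequality $|\nabla w| \ge |\nabla |w||$, for the lower-order terms). In the model case $|\nabla b| \equiv 1$ the radial equation $(\cL + 2\bar\lambda)\,v = 0$ reads $v'' + \big(\tfrac{n-1}{b} - \tfrac{b}{2}\big)\,v' + 2\bar\lambda\,v = 0$, whose non-exponential solution grows like $b^{4\bar\lambda}$; this predicts $I_w(r) \lesssim r^{4\bar\lambda}$, with the $5\bar\lambda$ in the statement serving as the margin that absorbs the errors below.

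First I would set up the frequency. For regular values $r$ of $b$, alongside $I_w(r)$ introduce a companion Dirichlet-type energy $D_w(r)$ comparable to $I_w(r)$ --- a normalized first-order flux of $w$ through $\{b = r\}$, equivalently, after Green's identity on $\{b < r\}$, a bulk energy $\sim r^{2-n}\!\int_{\{b<r\}}\!\big(|\nabla w|^2 + \bar\lambda\,|w|^2\big)$ taken with the renormalized weight $\e^{f - r^2/4}$ (which is $\le 1$ on $\{b<r\}$ and equals $1$ on $\{b=r\}$) --- and the frequency $U_w(r) = D_w(r)/I_w(r)$. Using the coarea formula and the first-variation formula for the weighted area of $\{b=r\}$ --- this is the content of Lemma $3.27$ in \cite{CM4}, which also provides the absolute continuity of $I_w$ and its extension across critical values of $b$ --- one gets
\begin{align}
	\big(\log I_w\big)'(r) = \frac{2}{r}\,U_w(r) + \mathcal{E}_0(r) \, ,
\end{align}
where $\mathcal{E}_0$ is assembled from $1 - |\nabla b|^2 = S/f$, from $\cL\,b - \big(\tfrac{n-1}{b} - \tfrac b2\big)$, and from the deviation of $\{b=r\}$ from a round sphere, and is $O(r^{-2})$ on the relevant integrand once $r \ge r_0$ by \eqref{e:ccz}, \eqref{e:cczV} and $S \ge 0$, hence integrable in $r$ near infinity. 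Differentiating $D_w$, integrating by parts on $\{b=r\}$, and using the Cauchy--Schwarz inequality there to estimate the cross term by $\big(I_w(r)\,D_w(r)\big)^{1/2}$ --- the step that costs a universal constant, and the reason the exponent is $5\bar\lambda$ rather than the $4\bar\lambda$ of the radial model --- together with the subsolution inequality gives an almost-monotonicity $U_w'(r) \ge -\tfrac{C}{r}\,U_w(r) - \mathcal{E}_1(r)$ with $\mathcal{E}_1$ again integrable near infinity.

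Finally I would close the argument. The weighted $L^2$ bound $\int_M |w|^2\,\e^{-f} < \infty$ forces $I_w(s_j) = o\big(\e^{s_j^2/4}\big)$ along a sequence $s_j \to \infty$, which excludes the exponentially growing radial mode and so bounds $\limsup_{r\to\infty} U_w(r)$; combined with the almost-monotonicity this gives $U_w(r) \le C_0\,\bar\lambda$ for all $r \ge r_0$, after enlarging $r_0$ depending on $\bar\lambda$ so the tails of $\mathcal{E}_0$ and $\mathcal{E}_1$ are absorbed. Integrating $\big(\log I_w\big)'(r) \le \tfrac{2 C_0 \bar\lambda}{r} + \mathcal{E}_0(r)$ from $r_1$ to $r_2$, and choosing $r_0$ large enough that $\exp\!\big(\int_{r_0}^{\infty} |\mathcal{E}_0|\big) \le 2$ and $2 C_0 \le 5$ (tracking the Cauchy--Schwarz loss), yields the stated estimate; the degenerate case in which $I_w$ vanishes on some $\{b < r\}$ is separate, since then $w \equiv 0$ there by unique continuation for the differential inequality and the claim is trivial. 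The step I expect to be the main obstacle is the error analysis of the second paragraph: showing that the corrections produced by $S > 0$ and by $\{b=r\}$ not being exactly conical are genuinely lower order --- uniformly in $w$ and summable in $r$ --- so that they do not destroy the near-monotonicity of $U_w$, and that all the integrations by parts survive the passage across the (possibly singular) level sets of $b$. This is exactly the technical core isolated, in the form needed here, in \cite{CM4}.
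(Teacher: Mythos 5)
The paper does not actually prove this proposition: it is imported wholesale as a special case of Theorem $3.4$ in \cite{CM4} (see the sentence immediately following the statement), so there is no in-paper argument to compare you against. Your sketch reconstructs the strategy of that reference correctly: a frequency/almost-monotonicity argument run along the level sets of $b = 2\sqrt{f}$, with the identity $b\,\cL b = n - |\nabla b|^2 - \tfrac{b^2}{2}$ (correctly derived from \eqr{ee:aronson1}--\eqr{ee:aronson2}) supplying the drift term that converts generic exponential growth into the polynomial rate $r^{4\bar\lambda}$ of the radial model, the hypothesis \eqr{e:LBL} entering only through the subsolution inequality $(\cL + 2\bar\lambda)\,|w|^2 \geq 0$, and the $L^2$ hypothesis ruling out the fast-growing mode and hence capping the frequency. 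All of those mechanisms are the right ones, and the bookkeeping of exponents ($4\bar\lambda$ in the model versus $5\bar\lambda$ in the statement as margin) is consistent.

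Two caveats. First, a sign slip: the renormalizing weight that makes the derivative identity for $I_w$ exact is $\e^{r^2/4 - f}$, which is $\geq 1$ on $\{b < r\}$ and $\equiv 1$ on $\{b = r\}$, not $\e^{f - r^2/4} \leq 1$ as you wrote; with your normalization the bulk quantity $D_w$ is not the one whose ratio with $I_w$ obeys the almost-monotonicity. Second, and more substantively, what you have written is an outline rather than a proof: the entire analytic content --- the first-variation formula for the weighted level-set integrals, the extension and absolute continuity of $I_w$ across critical values of $b$, and the verification that the errors coming from $1 - |\nabla b|^2 = S/f$ and from the non-conical geometry of $\{b = r\}$ are summable in $r$ uniformly in $w$ --- is precisely what you defer to Lemma $3.27$ and Theorem $3.4$ of \cite{CM4}. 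You flag this honestly, and since the paper itself defers the whole statement to the same source, this is not a gap relative to the paper; but a self-contained proof would have to carry out that error analysis rather than cite it.
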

  
  Proposition \ref{p:pgCM4} is a special case of Theorem $3.4$ in \cite{CM4}.   This proposition requires  the lower bound \eqr{e:LBL}
  for $\langle \cL w , w \rangle$, which would hold if $w$ satisfied an eigenvalue equation for $\cL$.  However,   we will want to apply it to a vector field that satisfies an eigenvalue equation for $\cP$.   The next result gives a decomposition for that equation that makes this possible (this a special case of Proposition $4.6$ in \cite{CM4}):

 \begin{Pro}  \label{p:4main0}
 \cite{CM4}.
 If    $\cP\,Y=\mu\,Y$  and we set $Z=Y+\frac{2}{2\mu+1}\,\nabla\,\dv_f\,(Y)$, then 
 $\dv_f\,(Z)=0$ and 
 \begin{align}
 ( \cL + \mu) \,\nabla\,\dv_f\,(Y) &=0 \,  , \label{e:e1}\\
\left( \cL +2\,\mu+\frac{1}{2} \right)\,Z&= 0 \, .\label{e:eigZ}
 \end{align}
 Moreover, if $Y  \in L^2$, then   $\|Y\|^2=\|Z\|^2+\left(\mu+\frac{1}{2} \right)^{-2}\,\|\nabla\,\dv_f\,(Y)\|^2$.
     \end{Pro}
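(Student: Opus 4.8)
The plan is to obtain all four assertions by substituting $\cP Y = \mu Y$ into the three operator identities recorded above --- \eqr{e:claim1}, \eqr{e:claim2} and \eqr{l:bochner} --- and using that on functions the weighted divergence of a gradient is the drift Laplacian, $\dv_f\,\nabla u = \cL u$; throughout $\kappa = \tfrac{1}{2}$.

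First I would identify the eigenvalue equation for $\dv_f Y$: by \eqr{e:claim1}, $(\cL + \tfrac{1}{2})\,\dv_f Y = -\dv_f(\cP Y) = -\mu\,\dv_f Y$, so $\cL\,\dv_f Y = -(\mu + \tfrac{1}{2})\,\dv_f Y$ and hence $\dv_f\,\nabla\,\dv_f Y = \cL\,\dv_f Y = -(\mu + \tfrac{1}{2})\,\dv_f Y$. Since $2(\mu + \tfrac{1}{2}) = 2\mu + 1$, this gives
\begin{align*}
	\dv_f Z = \dv_f Y + \tfrac{2}{2\mu+1}\,\dv_f\,\nabla\,\dv_f Y = \dv_f Y - \tfrac{2\mu+1}{2\mu+1}\,\dv_f Y = 0 \, .
\end{align*}
Next, \eqr{e:claim2} applied to $\cP Y = \mu Y$ reads $\cL\,\nabla\,\dv_f Y = -\nabla\,\dv_f(\cP Y) = -\mu\,\nabla\,\dv_f Y$, which is \eqr{e:e1}. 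For \eqr{e:eigZ}, rewrite \eqr{l:bochner} as $\cL + \tfrac{1}{2} = -2\,\cP - \nabla\,\dv_f$; applied to $Y$ this gives $(\cL + 2\mu + \tfrac{1}{2})\,Y = -2\,\cP Y - \nabla\,\dv_f Y + 2\mu\,Y = -\nabla\,\dv_f Y$, while \eqr{e:e1} gives $(\cL + 2\mu + \tfrac{1}{2})\,\nabla\,\dv_f Y = (\mu + \tfrac{1}{2})\,\nabla\,\dv_f Y$. Since $\tfrac{2}{2\mu+1}\,(\mu + \tfrac{1}{2}) = 1$, combining these with the defining weights of $Z$ yields $(\cL + 2\mu + \tfrac{1}{2})\,Z = -\nabla\,\dv_f Y + \nabla\,\dv_f Y = 0$.

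It remains to prove the $L^2$ identity. Writing $Y = Z - \tfrac{2}{2\mu+1}\,\nabla\,\dv_f Y$ and expanding, the only cross term is a multiple of $\int_M \langle Z, \nabla\,\dv_f Y\rangle\,\e^{-f}$, which by integration by parts equals $-\int_M (\dv_f Z)\,(\dv_f Y)\,\e^{-f} = 0$ since $\dv_f Z = 0$; hence $\|Y\|^2 = \|Z\|^2 + \tfrac{4}{(2\mu+1)^2}\,\|\nabla\,\dv_f Y\|^2 = \|Z\|^2 + (\mu + \tfrac{1}{2})^{-2}\,\|\nabla\,\dv_f Y\|^2$. The three differential assertions above are purely formal substitutions; the only place requiring care is this last step, namely checking that $\nabla\,\dv_f Y$ (equivalently $Z$) lies in $L^2$ when $Y$ does and that the integration by parts is legitimate. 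For the former, Lemma \ref{l:interpcP} gives $\dv_f Y \in L^2$, and then a standard cutoff/Caccioppoli argument applied to the $\cL$-eigenfunction $\dv_f Y$ gives $\|\nabla\,\dv_f Y\|_{L^2}^2 = (\mu + \tfrac{1}{2})\,\|\dv_f Y\|_{L^2}^2 < \infty$; the integration by parts then follows by exhausting $M$ with cutoff functions, as is routine on complete weighted manifolds. I expect this integrability bookkeeping to be the only (mild) obstacle.
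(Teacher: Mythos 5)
Your derivation is correct, and it is the natural (and essentially the only) route: the paper itself gives no proof of this proposition, citing Proposition~4.6 of \cite{CM4}, and your argument is exactly the formal substitution of $\cP Y=\mu Y$ into \eqr{e:claim1}, \eqr{e:claim2} and \eqr{l:bochner}, plus the orthogonality of $Z$ and $\nabla\,\dv_f Y$ coming from $\dv_f Z=0$. Your integrability remarks at the end (via Lemma~\ref{l:interpcP} and a cutoff argument for the eigenfunction $\dv_f Y$) correctly identify and dispose of the only nontrivial point, the justification of the integration by parts in the $L^2$ identity.
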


  \vskip2mm
We are now prepared to prove that $\dv_f^* Y$ grows at most polynomially when $Y$ is an eigenvector field for $\cP$.
 
\begin{Thm}	\label{t:main1}
Suppose that $M,g$ has bounded curvature $|\R| \leq C_1$ and  $Y$ is a $W^{1,2}$ vector field with $\| Y \|_{L^2}=1$ 
that satisfies (A) and (B).  There exist  $C_2 , R$ so that  for all $r \geq R$
\begin{align}
	I_{\dv_f^* Y} (r) \leq C_1 \, r^{C_2} \, \mu \, .
\end{align}
\end{Thm}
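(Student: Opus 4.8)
The plan is to reduce the bound to the polynomial growth estimate of Proposition \ref{p:pgCM4}. That proposition applies to any $L^2$ tensor $w$ satisfying a one-sided bound $\langle \cL w, w\rangle \ge -\bar\lambda\,|w|^2$, but it cannot be applied directly to $w=\dv_f^*Y$: since $Y$ is only a $\cP$-eigenfield, \eqref{l:bochner} gives $(\cL+\kappa)Y=-2\mu\,Y-\nabla\dv_f Y$, which is not proportional to $Y$, so \eqref{e:firstc} does not produce a clean equation for $\dv_f^* Y$. I would get around this using the decomposition of Proposition \ref{p:4main0}. Put $v=\dv_f Y$ and $\hat Z=Y+\frac{2}{2\mu+1}\,\nabla v$, so that $\dv_f \hat Z=0$, $(\cL+\mu)\,\nabla v=0$, and $(\cL+2\mu+\tfrac12)\,\hat Z=0$; taking $\dv_f^*$ of $Y=\hat Z-\frac{2}{2\mu+1}\nabla v$ and using $\dv_f^*(\nabla u)=-\Hess_u$ gives the splitting $\dv_f^*Y=\dv_f^*\hat Z+\frac{2}{2\mu+1}\,\Hess_v$. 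It then suffices to prove a polynomial growth bound, with the correct power of $\mu$, separately for $\dv_f^*\hat Z$ and for $\Hess_v$.

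For each piece I would derive the one-sided $\cL$-bound from \eqref{e:firstc} together with $|\R|\le C_1$. From $(\cL+\kappa)\hat Z=-2\mu\,\hat Z$ one gets $L\,\dv_f^*\hat Z=-2\mu\,\dv_f^*\hat Z$, and from $(\cL+\kappa)\nabla v=(\tfrac12-\mu)\nabla v$ together with $\dv_f^*(\nabla v)=-\Hess_v$ one gets $L\,\Hess_v=(\tfrac12-\mu)\,\Hess_v$. Since $L=\cL+2\R$ and $|\langle\R(h),h\rangle|\le c(n)\,C_1\,|h|^2$ for symmetric two-tensors $h$, both $w=\dv_f^*\hat Z$ and $w=\Hess_v$ satisfy the pointwise bound $\langle\cL w,w\rangle\ge-\bar\lambda\,|w|^2$ with $\bar\lambda=\tfrac12+2c(n)\,C_1$ (for $\Hess_v$ one drops the nonnegative term $(\tfrac12-\mu)|\Hess_v|^2$, using $\mu\le\tfrac12$). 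One also needs these tensors to lie in $L^2$: for $\Hess_v$ this follows e.g. from Lemma \ref{l:interpcP} applied to the vector field $\nabla v$, which itself satisfies $\cP\,\nabla v=\mu\,\nabla v$, and then $\dv_f^*\hat Z\in L^2$ follows from the splitting since $\dv_f^*Y\in L^2$. Hence Proposition \ref{p:pgCM4} applies and yields $r_0=r_0(n,C_1)$ with $I_w(r_2)\le 2(r_2/r_1)^{5\bar\lambda}I_w(r_1)$ for $r_2>r_1\ge r_0$, for each of the two $w$'s.

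To pin down the initial value of $I_w$ I would pass through the bulk $L^2$ norm, and this is where the factor $\mu$ enters. Integrating $\cL v=-(\tfrac12+\mu)v$ against $v\,\e^{-f}$ gives $\|\nabla v\|_{L^2}^2=(\tfrac12+\mu)\|v\|_{L^2}^2$, bounded using the $L^2$ bound on $v=\dv_f Y$ in (B); the drift Bochner identity $\tfrac12\cL|\nabla v|^2=|\Hess_v|^2-\mu\,|\nabla v|^2$ (as in the proof of Proposition \ref{t:RSK}), integrated against the weight, then gives $\|\Hess_v\|_{L^2}^2=\mu\,\|\nabla v\|_{L^2}^2\le C\,\mu$, and hence $\|\dv_f^*\hat Z\|_{L^2}^2\le C\,\mu$ as well using $\|\dv_f^*Y\|_{L^2}^2=\mu$. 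Since $|\nabla b|\le 1$ by \eqref{e:gradb1}, the coarea formula gives $\int_0^\infty\e^{-r^2/4}\,r^{\,n-1}\,I_w(r)\,dr\le\|w\|_{L^2}^2$ for any $L^2$ tensor $w$; a pigeonhole argument on $[r_0,2r_0]$ then produces a radius $\rho\in[r_0,2r_0]$ with $I_{\dv_f^*\hat Z}(\rho)+I_{\Hess_v}(\rho)\le C(n,C_1)\,\mu$ (converting a bulk bound to a bound on a single level set costs only the factor $\e^{\rho^2/4}\rho^{\,1-n}$, which is bounded since $\rho$ is comparable to $r_0$). Propagating outward from $\rho$ with Proposition \ref{p:pgCM4} and recombining via $I_{\dv_f^*Y}(r)\le 2\,I_{\dv_f^*\hat Z}(r)+8\,I_{\Hess_v}(r)$ (using $\frac{2}{2\mu+1}\le 2$) gives $I_{\dv_f^*Y}(r)\le C_1\,r^{C_2}\,\mu$ for $r\ge R:=2r_0$, after absorbing the constants, which is the claimed estimate.

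The main obstacle is the structural point in the first paragraph: $\dv_f^*Y$ by itself does not obey an $\cL$-eigen-inequality, and Proposition \ref{p:4main0} is precisely the device that repairs this, trading $Y$ for the divergence-free $\cL$-eigenfield $\hat Z$ and the Hessian of the $\cL$-eigenfunction $v=\dv_f Y$. The remaining points are routine: justifying the integrations by parts against the Gaussian weight by cutoffs (using $\nabla v,\Hess_v\in L^2$), checking $L^2$-membership so that Proposition \ref{p:pgCM4} applies, and bookkeeping to see that all constants depend only on $n$ and $C_1$ and that $\mu$ enters linearly.
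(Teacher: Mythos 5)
Your proposal is correct and follows essentially the same route as the paper: decompose $\dv_f^*Y$ via Proposition \ref{p:4main0} into $\dv_f^*\hat Z$ and $\Hess_{\dv_f Y}$, use \eqref{e:firstc} and the curvature bound to verify the hypothesis of Proposition \ref{p:pgCM4} for each piece, seed the growth estimate with the $L^2$ bounds coming from (A), (B) and the drift Bochner formula via the coarea formula, and recombine. Your extra details (the pigeonhole at the initial radius, the $L^2$-membership check, and the sign $-2\mu$ rather than $+2\mu$ in the equation for $L\,\dv_f^*\hat Z$) are all consistent with, or minor corrections to, the paper's argument.
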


\begin{proof}
Since $\cP \, Y = \mu \, Y$, Proposition \ref{p:4main0} gives that $Z= Y + \frac{2}{1+2\, \mu} \, \nabla \dv_f \, Y$ satisfies
\begin{align}	\label{e:setZ}
	\cL \, Z &= - \left( 2\, \mu + \frac{1}{2} \right) \, Z \, , \\
	\cL \, \nabla \, \dv_f \, Y &= - \mu \, \nabla \, \dv_f \, Y \, .  \label{e:setnd}
\end{align}
Applying \eqr{e:firstc} to \eqr{e:setZ} and \eqr{e:setnd} gives
\begin{align}
	L \, \dv_f^* \, Z &= \dv_f^* \left(\cL + \frac{1}{2} \right) \, Z = 2\, \mu \, \dv_f^*   \, Z \, , \label{e:combo1} \\
	L \, \dv_f^* \nabla \dv_f \, Y & = \dv_f^* \left(\cL + \frac{1}{2} \right) \, \nabla \, \dv_f \, Y = \left( \frac{1}{2} - \mu \right) \,  \dv_f^* \, \nabla {\dv_f Y} \, .
\end{align}
The last equation can be rewritten as
\begin{align}
	L \, \Hess_{\dv_f Y} =  \left( \frac{1}{2} - \mu \right) \, \Hess_{\dv_f Y}  \, . \label{e:combo2}
\end{align}
Since $M,g$ has bounded curvature and $\mu$ is also bounded, \eqr{e:combo1} and \eqr{e:combo2} give $C$ so that
\begin{align}	\label{e:combo3}
	\langle \cL \, w , w \rangle \geq - C \, |w|^2 {\text{ for }} w =  \Hess_{ \dv_f Y} {\text{ or }} w = \dv_f^* \, Z \, .
\end{align}
Thus, Proposition \ref{p:pgCM4} applies, giving $R$ and $C_1$ so that   $$w =  \Hess_{ \dv_f Y} {\text{ or }}  \dv_f^* \, Z $$ both satisfy
\begin{align}	\label{e:conclude}
	I_w (r_2) \leq \left( \frac{r_2}{r_1} \right)^{ C_1} \, I_w (r_1) {\text{ for any }} r_2 > r_1 \geq R \, .
\end{align}
Next, observe that (B) and the drift Bochner formula give that
\begin{align}
	\| \Hess_{\dv_f Y} \|_{L^2}^2 \leq \mu \, .
\end{align}
On the other hand, (A) gives that $\| \dv_f^* Y \|_{L^2}^2 \leq \mu$ as well.  It follows that 
\begin{align}
	\| \dv_f^* Z \|_{L^2}^2 \leq C \, \mu \, . 
\end{align}
Since $|\nabla b| \leq 1$, the co-area formula gives some $r_1 \geq R$ and a constant $C_2$ so that
\begin{align}
	I_w(r_1) \leq C_2 \, \mu {\text{ for }} w =  \Hess_{ \dv_f Y} {\text{ or }} w = \dv_f^* \, Z \, .
\end{align}
Using this in \eqr{e:conclude} and then writing $\dv_f^*Y$ as a combination gives the theorem.
\end{proof}

We are now prepared to prove the main theorem.

\begin{proof}[Proof of Theorem \ref{t:main}]
The first step is to cutoff  the vector field to get a compactly supported vector field $V$ that we can use in Lemma \ref{l:exist}.  To do this, define a cutoff function $\eta$ with $0 \leq \eta \leq 1$ that has support in 
$f < \frac{r^2}{4}$, cuts off in distance $r^{-1}$, has $|\nabla \eta | \leq 2 \, r$  and so that
\begin{align}
	\e^{-f} \leq \e^{-1} \, \e^{ - \frac{r^2}{4}} {\text{ on the support of }} |\nabla \eta|  
	\, .
\end{align}
The last bound uses that $|\nabla b| \leq 1$ by \eqr{e:gradb1}.  Now set $V = \eta \, Y$.  Since $|Y| \leq C_1 \, r$ on the support of $\eta$, it follows that
\begin{align}	\label{e:Vbig}
	\| V \|_{L^2}^2 &\geq \int_{\eta =1} |Y|^2 \, \e^{-f} = 1 - \int_{0<\eta < 1} |Y|^2 \, \e^{-f} \geq 1 - C  \, r^{2+n} \, \e^{ - \frac{r^2}{4}} \, ,
\end{align}
where the last bound also used the volume bound \eqr{e:cczV}.  Similarly, we have that
\begin{align}	 
	\| \dv_f^* V \|_{L^2}^2 &\leq 2 \, \int_{f < r^2/4} |\dv_f^* Y|^2 \, \e^{-f} + C \, r^{4+n} \, \e^{ -\frac{r^2}{4}} \notag \\
	&\leq 2\, \bar{\mu} +   C \, r^{4+n} \, \e^{ -\frac{r^2}{4}}  \, .
\end{align}
After multiplying $V$ by a constant so that the $L^2$ norm is one, we have that
\begin{align}	\label{e:Liesmall}
	\| \dv_f^* V \|_{L^2}^2  
	\leq 3\, \bar{\mu} +   C \, r^{4+n} \, \e^{ -\frac{r^2}{4}} < 1 \, .
\end{align}
Lemma \ref{l:exist}
gives a $W^{1,2}$ vector field $Z$ with $\| Z \|_{L^2} = 1$  and satisfying (A) and (B).  In particular, $\cP \, Z = \mu \, Z$ with
\begin{align}
	\mu \leq 3\, \bar{\mu} +   C \, r^{4+n} \, \e^{ -\frac{r^2}{4}} < 1 \, .
\end{align}
 This gives (Z1). The fine growth bound (Z2) follows from Theorem \ref{t:main1}.
\end{proof}

\end{document}